\newcommand{\sD}{\mathcal{D}}
\newcommand{\sS}{\mathcal{S}}
\newcommand{\sT}{\mathcal{T}}
\newcommand{\R}{\mathbb{R}}
\newcommand{\E}{\mathbb{E}}
\newcommand{\Z}{\mathbb{Z}}
\newcommand{\bv}[1]{\mathbf{#1}}
\newcommand{\policy}[2]{\pi_{#1}^{#2}}
\newcommand{\KL}{\text{KL}}
\newcommand{\DKL}{\mathcal{D}_{\KL}}
\newtheorem{Remark}{Remark}
\newtheorem{Theorem}{Theorem}
\newtheorem{Definition}{Definition}
\newtheorem{Lemma}{Lemma}
\newtheorem{Corollary}{Corollary}
\newtheorem{Assumption}{Assumption}
\newtheorem{Problem}{Problem}
\def\BibTeX{{\rm B\kern-.05em{\sc i\kern-.025em b}\kern-.08em
    T\kern-.1667em\lower.7ex\hbox{E}\kern-.125emX}}
\begin{document}
\title{On the crowdsourcing of behaviors for autonomous agents}
\author{Giovanni Russo, \IEEEmembership{Senior Member, IEEE}
\thanks{G. Russo is with the Dept. of Information and Electrical Eng. \& Applied Math. at the University of Salerno, Italy (giovarusso@unisa.it)}}
 
 \pagestyle{empty} 
 
\maketitle

\thispagestyle{empty}

\begin{abstract}
This paper is concerned with the  problem of designing, from data, agents that are able to craft their behavior from a set of {contributors} in order to fulfill some agent-specific task. This is not necessarily known to the contributors. After formalizing this crowdsourcing  process as a control problem, we present a result to synthesize behaviors from the information made available by the contributors. The result is turned into an algorithm and the theoretical findings are complemented via an example.
\end{abstract}

\begin{IEEEkeywords}
Autonomous systems; Optimization; Data-driven control; Decision-making under uncertainty
\end{IEEEkeywords}

\section{INTRODUCTION}

\IEEEPARstart{W}{hen} we are given a {\em new} task, we can {\em ask others} for more information, crowdsourcing knowledge to achieve our goal. Systems based on crowdsourcing are gaining popularity in several domains: sharing economies \cite{Sharing_book} and blockchain \cite{8496756} are two examples that involve using behaviors, and goods, from others to achieve an individual task. Indeed, agents within a crowdsourcing platform can leverage it to solve tasks with a lower effort than if they were working alone \cite{10.1145/3226028}. 

In this work, we investigate whether decision-making { mechanisms} can be designed to enable autonomous agents to take advantage of the opportunities offered by crowdsourcing. We consider the problem of designing agents that are able to craft their behavior from data made available by others (contributors) so as to fulfill some individual task. We term this problem as the crowdsourcing problem and tackle the question of whether: (i) agents can fulfill a task by using crowdsourced knowledge rather than finding a solution from scratch; (ii) the crowdsourced behavior can outperform the contributors.

\subsubsection*{Related work} federated learning, see e.g. \cite{8664630}, has been proposed to enable a network of agents with a common goal to harmonize and share data models from distributed datasets. In \cite{10.1145/3328526.3329589},  a two-sided data marketplace is formalized and management algorithms are introduced, while techniques allowing robots with the same task to re-use models have been introduced in \cite{5876227}. We also recall works on multi-agent reinforcement learning \cite{Busoniu2010}, where agents collaborate in order to learn a common task, and on preference elicitation  within a probabilistic framework of decision-making \cite{KARNY2019239}, where an individual makes optimal decisions based on his/her own preferences. As we shall see, crowdsourcing can be formalized as a data-driven control problem \cite{8933093,8960476,HOU20133} and recent results include \cite{TANASKOVIC20171}, where a direct data-driven design approach is introduced for stabilizable Lipschitz systems,  \cite{8453019}, where the design of controllers for linear systems is considered, \cite{8933093} that proposes the use of data-dependent linear matrix inequalities and \cite{8039204,8795639} that introduce MPC-inspired data-driven algorithms. 

\subsubsection*{Contributions}  we tackle the crowdsourcing problem for tasks that  require tracking a target behavior while  optimizing an agent-specific reward function. Specifically: (i) we formalize crowdsourcing as a control problem that involves designing the shape of certain probability density/mass functions. {To the best of our knowledge, this is the first paper that proposes a formulation of crowdsourcing as an optimal control problem}; (ii) after characterizing the problem, we present a result to synthesize the agent's behavior from the information made available by the contributors. {The result relies on first approximating (via two lemmas that are also a contribution of this paper) the original problem with a convex problem. Then,  a number of transformations are developed, which split the problem into sub-problems that can be solved recursively. We also turn our main results into an algorithmic procedure;} (iii) finally, we relate the performance obtained by the agent to the performance that would have been obtained by the contributors if they were performing the task, showing that the crowdsourced behavior can outperform the  contributors. The  findings are complemented via an example.

\section{Mathematical Preliminaries}

Sets are in {\em calligraphic} and vectors are in {\bf bold}. Consider the measurable space $(\mathcal{X},\mathcal{F}_x)$, with $\mathcal{X}\subseteq\R^{d}$ ($\mathcal{X}\subseteq\Z^{d}$) and $\mathcal{F}_x$ being a $\sigma$-algebra on $\mathcal{X}$. A random variable on $(\mathcal{X},\mathcal{F}_x)$ is denoted by $\mathbf{X}$ and its realization is $\mathbf{x}$. We denote the \textit{probability density function} (\textit{probability mass function}) or simply \textit{pdf} (\textit{mdf}) of a continuous (discrete) $\mathbf{X}$  by $p(\mathbf{x})$ and we let $\sD$ be the convex subset of density (mass) functions. When we take the integrals (sums) involving pdfs (mdfs) we always assume that the integral (sum) exists. The  expectation of a function $\mathbf{h}(\cdot)$ of $\mathbf{X}$ is $\E_{{p}}[\mathbf{h}(\mathbf{X})]:=\int\mathbf{h}(\mathbf{x})p(\mathbf{x})d\mathbf{x}$, where the integral is over the support of $p(\mathbf{x})$. The \textit{joint} pdf (mdf) of two random vectors, $\mathbf{X}_1$ and $\mathbf{X}_2$, is denoted by  $p(\mathbf{x}_1,\mathbf{x}_2)$ and the \textit{conditional} probability density (mass) function or {\em cpdf} ({\em cpmf}) of $\mathbf{X}_1$ with respect to $\mathbf{X}_2$ is $p\left( \mathbf{x}_1| \mathbf{x}_2 \right)$. Countable sets are denoted by $\lbrace w_k \rbrace_{k_1:k_n}$, where: (i) $w_k$ is the generic set element; (ii) $k_1$, $k_n$ are the indices of the first and last element; (iii) $k_1:k_n$ is the set of consecutive integers between (including) $k_1$ and $k_n$. Pdfs (or pmfs) of the form $p(\bv{x}_0,\ldots,\bv{x}_N)$ are compactly written as $p_{0:N}$ (note that $p_{k:k} := p_k(\bv{x}_k)$). We use the shorthand notation $p_{k|k-1}$ to denote $p_k(\bv{x}_k|\bf{x}_{k-1})$. 
\subsubsection{Useful tools}
consider two pdfs,  $p^{(1)}:=p^{(1)}(\mathbf{z})$ and $p^{(2)}:=p^{(2)}(\mathbf{z})$, with $p^{(1)}$ being absolutely continuous with respect to $p^{(2)}$. The Kullback-Leibler (KL \cite{KL_51}) divergence of $p^{(1)}$ with respect to $p^{(2)}$ is  $\mathcal{D}_{\KL}\left(p^{(1)} || p^{(2)} \right):= \int p^{(1)} \; \ln\left( {p^{(1)}}/{p^{(2)}}\right)\,d\mathbf{z}$.
In case of discrete variables, the KL-divergence between two pmfs is   $\mathcal{D}_{\KL}\left(p^{(1)} || p^{(2)} \right):= \sum p^{(1)} \; \ln\left( {p^{(1)}}/{p^{(2)}}\right)$. For both pdfs and pmfs, $\mathcal{D}_{\KL}\left(p^{(1)} || p^{(2)} \right)$ is a measure of the proximity of the pair $p^{(1)}$ and $p^{(2)}$. Consider two joint pdfs (mdfs), say $p^{(1)}(\bv{x}_0,\ldots,\bv{x}_N)$ and $p^{(2)}(\bv{x}_0,\ldots,\bv{x}_N)$, such that $p^{(1)}(\bv{x}_0,\ldots,\bv{x}_N)  = p_0^{(1)}(\bv{x}_0)\prod_{k=1}^Np_k^{(1)}(\bf{x}_k|\bf{x}_{k-1})$ and $p^{(2)}(\bv{x}_0,\ldots,\bv{x}_N)  = p_0^{(2)}(\bv{x}_0)\prod_{k=1}^Np_k^{(2)}(\bf{x}_k|\bf{x}_{k-1})$. 
Then, the following result holds (see e.g. \cite{Gagliardi_D_et_Russo_G_IFAC2020_extended_Arxiv}):
\begin{Lemma}\label{lem:splitting_property}	
consider $p^{(1)}_{1:N}$ and $p^{(2)}_{1:N}$. Then: $
\mathcal{D}_{\KL}
\left( p^{(1)}_{1:N} || p^{(2)}_{1:N}  \right) =
\mathcal{D}_{\KL}
\left( p^{(1)}_{1:N-1}  || p^{(2)}_{1:N-1}  \right) +\mathbb{E}_{p^{(1)}_{1:N-1}}
\left[	
\mathcal{D}_{\KL} 
\left(p_N^{(1)}(\bv{x}_N|\bv{x}_{N-1}) || p_N^{(2)}(\bv{x}_N|\bv{x}_{N-1})
\right)
\right]$.
\end{Lemma}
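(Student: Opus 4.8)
The plan is to prove the identity directly from the definition of the KL divergence, exploiting the factorized (Markov) structure of the two joints together with the chain rule for the logarithm. Writing the divergence as an integral over $(\bv{x}_0,\ldots,\bv{x}_N)$ and splitting each joint into its lower-order marginal times the last conditional factor, i.e. $p^{(i)}_{1:N} = p^{(i)}_{1:N-1}\,p^{(i)}_N(\bv{x}_N|\bv{x}_{N-1})$ for $i=1,2$, the log-ratio in the integrand separates additively into $\ln\left(p^{(1)}_{1:N-1}/p^{(2)}_{1:N-1}\right)$ and $\ln\left(p^{(1)}_N(\bv{x}_N|\bv{x}_{N-1})/p^{(2)}_N(\bv{x}_N|\bv{x}_{N-1})\right)$. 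By linearity of the integral this yields two terms, which I would handle separately.

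For the first term I would note that the log-ratio $\ln\left(p^{(1)}_{1:N-1}/p^{(2)}_{1:N-1}\right)$ does not depend on $\bv{x}_N$, so $\bv{x}_N$ can be integrated out first. Since $p^{(1)}_N(\bv{x}_N|\bv{x}_{N-1})$ is a cpdf it integrates to one, and marginalizing the joint $p^{(1)}_{1:N}$ over $\bv{x}_N$ returns exactly $p^{(1)}_{1:N-1}$. The first term therefore collapses to $\mathcal{D}_{\KL}\left(p^{(1)}_{1:N-1} || p^{(2)}_{1:N-1}\right)$, which is the first summand in the claim.

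For the second term I would instead retain the factorization $p^{(1)}_{1:N}=p^{(1)}_{1:N-1}\,p^{(1)}_N(\bv{x}_N|\bv{x}_{N-1})$ and integrate over $\bv{x}_N$ first. The inner integral $\int p^{(1)}_N(\bv{x}_N|\bv{x}_{N-1})\ln\left(p^{(1)}_N(\bv{x}_N|\bv{x}_{N-1})/p^{(2)}_N(\bv{x}_N|\bv{x}_{N-1})\right)d\bv{x}_N$ is, by definition, the conditional divergence $\mathcal{D}_{\KL}\left(p^{(1)}_N(\bv{x}_N|\bv{x}_{N-1}) || p^{(2)}_N(\bv{x}_N|\bv{x}_{N-1})\right)$, a function of $\bv{x}_{N-1}$ alone. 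Integrating this against the remaining factor $p^{(1)}_{1:N-1}$ produces precisely the term $\mathbb{E}_{p^{(1)}_{1:N-1}}\left[\,\cdot\,\right]$, the second summand. Summing the two contributions completes the proof.

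The step I expect to require the most care is the reordering of integration and the two marginalization arguments: one must check that integrating the last conditional factor out of the joint indeed recovers the lower-order joint $p^{(i)}_{1:N-1}$, which is exactly what the assumed factorized structure guarantees. Under the paper's standing assumption that the relevant integrals (sums) exist, these interchanges are legitimate, and the discrete case follows verbatim with sums replacing integrals.
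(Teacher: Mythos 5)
Your proof is correct: the chain-rule splitting of the log-ratio, followed by marginalizing $\bv{x}_N$ out of the first term (using that the conditional integrates to one) and recognizing the inner integral of the second term as the conditional divergence, is exactly the standard derivation of this identity. The paper does not supply its own proof of Lemma \ref{lem:splitting_property} (it cites \cite{Gagliardi_D_et_Russo_G_IFAC2020_extended_Arxiv}), and your argument is precisely the one that reference relies on, so there is nothing substantive to compare beyond noting that your attention to the integrability and marginalization steps is appropriate.
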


\section{Formulation of the control problem}\label{sec:problem_formulation}
We now set-up and formalize the crowdsourcing problem.

\subsubsection{The agent and its target behavior}
we let $\bv{x}_k\in\mathcal{X}$ be the state of the agent at time $k$, $\bv{d}_{0:N}:=\{\bv{x}_0,\ldots,\bv{x}_N\}$ be a dataset illustrating a target behavior for the agent over the time horizon $\sT:={0:N}$ and $p(\bv{d}_{0:N}) = p(\bv{x}_0,\ldots,\bv{x}_N)$ be the joint pdf (or mdf) obtained from the observed data. We make the standard assumption that the Markov property holds. Then, following the chain rule for pdfs/mdfs, we have:
\begin{equation}\label{eqn:target}
\begin{split}
p_{0:N} & = p_0(\bv{x}_0)\prod_{k=1}^Np_k(\bv{x}_k|\bv{x}_{k-1}) = p_{0:0}\prod_{k=1}^N p_{k|k-1},
\end{split}
\end{equation}
which is simply termed as {\em target behavior} in what follows.
\begin{Remark}
 (\ref{eqn:target}) is a probabilistic description, a black box type model \cite{Gagliardi_D_et_Russo_G_IFAC2020_extended_Arxiv} that can always be obtained from the data.
\end{Remark}

We design the behavior of the agent by designing its joint pdf/mdf, i.e. $\pi(\bv{x}_0,\ldots,\bv{x}_N)$, which can be factorized as:
\begin{equation}\label{eqn:agent}
\begin{split}
\pi_{0:N}  & = \pi_{0:0}\prod_{k=1}^N \pi_{k|k-1},
\end{split}
\end{equation}
 $\pi_{k|k-1}:= \pi(\bv{x}_k|\bv{x}_{k-1})$. Hence, we can design the {\em agent behavior} $\pi(\bv{x}_0,\ldots,\bv{x}_N)$ by designing the sequence of $\pi_{k|k-1}$'s. 

\subsubsection{The agent-specific reward}
the goal of the agent is that of tracking the target behavior while, at the same time, optimizing an agent-specific reward function.  We let $r_k:\mathcal{X}\rightarrow\R$ be the reward obtained by an agent for being in state $\bv{x}_k$ at time $k$. Hence, the expected reward obtained by the agent that follows the behavior in (\ref{eqn:agent}) is given by:
\begin{equation}\label{eqn:reward}
\begin{split}
\E_{\pi_{k-1:k-1}}\left[\tilde{r}_k(\bv{X}_{k-1})\right] := \E_{\pi_{k-1:k-1}}\left[\E_{\pi_{k|k-1}}\left[r_k(\bv{X}_k)\right]\right].
\end{split}
\end{equation}

\subsubsection{The contributors}
we let $\sS:={1:S}$ be the set of contributors from which the agent seeks to crowdsource its behavior. In what follows, $\left\{\policy{k|k-1}{(i)}\right\}_{1:N}$ is the sequence of pdfs/mdfs (i.e. the behavior) provided by the $i$-th  contributor. The contributors do not necessarily know the specific target behavior and/or the specific reward of the agent that is crowdourcing. Also, the fact that the behavior of the contributors is specified as a pdf (or mdf) makes it possible to consider situations where the contributors might want to obfuscate their behavior with some noise to guarantee some level of e.g. differential privacy. We make the following assumption:
\begin{Assumption}\label{asn:source_policies}
The  $\policy{k|k-1}{(i)}$'s are such that $\DKL(\policy{k|k-1}{(i)}||p_{k|k-1})<+\infty$, $\forall k$ and $\forall i\in\sS$.
\end{Assumption}
\begin{Remark}
Assumption \ref{asn:source_policies} is not restrictive in practice. {As we shall see, the pdfs (mdfs) $\policy{k|k-1}{(i)}$'s are inputs received by the agent to craft its  behavior from the contributors (see e.g. Algorithm \ref{alg:problem_2}). Hence, upon receiving the $\policy{k|k-1}{(i)}$'s,  the agent can check Assumption \ref{asn:source_policies} for these pdfs (mdfs) and use only the $\policy{k|k-1}{(i)}$'s that satisfy such assumption.}
\end{Remark}

\subsubsection{Crowdsourcing as a control problem}\label{sec:crowd_noconstr}
 let $\alpha_k^{(i)}$, $i\in\sS$, be a weight and $\boldsymbol{\alpha}_k$ be the stack of the $\alpha_k^{(i)}$'s. Then, the crowdsourcing problem can be formalized via the following:
\begin{Problem}\label{prob:problem_2}
find the weights, ${\left\{\boldsymbol{\alpha}_k^\ast\right\}_{1:N}}$, such that:
\begin{equation}\label{eqn:problem_2}
    \begin{aligned}
&{\left\{\boldsymbol{\alpha}_k^\ast\right\}_{1:N}}\in \\
&    \underset{\left\{ \boldsymbol{\alpha}_k\right\}_{1:N}}{\text{arg min}}
    \DKL\left(\pi_{1:N}||p_{1:N}\right) - \sum_{k=1}^N\E_{\pi_{k-1:k-1}}\left[\tilde{r}_k(\bv{X}_{k-1})\right]\\
    s.t. & \pi_{k|k-1} = \sum_{i\in\sS}\alpha_k^{(i)}\policy{k|k-1}{(i)}, \ \ \ \forall k \\
    & \sum_{i\in\sS}\alpha_k^{(i)} = 1, \ \ \alpha_k^{(i)}\ge 0,  \ \forall i\in\sS, \ \ \forall k.
    \end{aligned}
\end{equation}
\end{Problem}
{
The following definition is used in the rest of the paper.
\begin{Definition}
we simply say that $\left\{\tilde{\pi}_{k|k-1}\right\}_{1:N}$, with $\tilde{\pi}_{k|k-1}:=\sum_{i\in\sS}{\alpha_k^{(i)}}^\ast\pi_{k|k-1}^{(i)}$ is: (i) an optimal solution of the crowdsourcing problem if the weights ${\alpha_k^{(i)}}^\ast$, $i\in\sS$, are a solution of (\ref{eqn:problem_2}); (ii) an approximate solution of the crowdsourcing problem if the weights are the solution of a problem that has the same constraints of (\ref{eqn:problem_2}) but a different cost function upper-bounding the cost function of (\ref{eqn:problem_2}).
\end{Definition}
\begin{Remark}
with Theorem \ref{thm:probl_2_sol} we show that an approximate solution  can be conveniently obtained by splitting Problem \ref{prob:problem_2} into sub-problems that are then solved recursively.
\end{Remark}
\begin{Remark}
Problem \ref{prob:problem_2} is a finite-horizon optimal control problem having $\boldsymbol{\alpha}_k$'s as decision variables. As we shall see, these are generated as feedback from the agent state and allow the construction of the pdfs/mdfs of the agent behavior.
\end{Remark}}
In Problem \ref{prob:problem_2}: (i) minimizing the first term in the cost  amounts at minimizing the discrepancy between the agent behavior and the target. Minimizing the second term maximizes the expected reward; (ii) the constraints formalize the fact that the agent behavior is crafted from the contributors (note how the constraints  guarantee  that $\pi_{k|k-1}\in\sD$, $\forall k$). 

\section{Tackling the crowdsourcing problem}\label{sec:tackling}
Inspired by \cite{Gagliardi_D_et_Russo_G_IFAC2020_extended_Arxiv}, we tackle Problem \ref{prob:problem_2} by splitting it in sub-problems that  can be solved recursively. Before introducing our main result we  give two technical lemmas
\begin{Lemma}\label{lem:non_convexity}
let $\gamma_k:\mathcal{X}\rightarrow \R$ be  integrable under the measure given by $\pi_{k|k-1}$ and consider
\begin{equation}\label{eqn:problem_non_convex}
    \begin{aligned}
    \underset{\left\{ {\beta}_{i}\right\}_{1:S}}{\text{min}}
    &\DKL\left(\pi_{k|k-1}||p_{k|k-1}\right) - \E_{\pi_{k|k-1}}\left[{\gamma}_k(\bv{X}_{k-1})\right]\\
    s.t. & \pi_{k|k-1} = \sum_{i\in\sS}\beta_{i}\policy{k|k-1}{(i)}, \\
    & \sum_{i\in\sS}\beta_{i} = 1, \ \ \ \beta_{i}\ge 0,  \ \ \forall i\in\sS.
    \end{aligned}
\end{equation}
The problem in (\ref{eqn:problem_non_convex}) is not convex.
\end{Lemma}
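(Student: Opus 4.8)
The plan is to disprove convexity by producing a single explicit instance of the data $\policy{k|k-1}{(i)}$, $p_{k|k-1}$ and $\gamma_k$ on which the cost in (\ref{eqn:problem_non_convex}), viewed as a function of the weights over the feasible set, violates the midpoint convexity inequality. Because the constraints $\sum_{i\in\sS}\beta_i=1$, $\beta_i\ge0$ describe a simplex, hence a convex set, the feasible region is not at issue: it suffices to exhibit one segment of weights along which the objective fails to be convex.

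First I would reduce to the smallest informative case, namely $S=2$ contributors over a two-point state space, so that the simplex collapses to the single scalar $\lambda:=\beta_1\in[0,1]$ (with $\beta_2=1-\lambda$) and $\pi_{k|k-1}=\lambda\policy{k|k-1}{(1)}+(1-\lambda)\policy{k|k-1}{(2)}$. Writing the cost as a scalar map $J(\lambda)$ and differentiating twice, the KL part is always convex:
\[
\frac{d^2}{d\lambda^2}\DKL\!\left(\pi_{k|k-1}\,\|\,p_{k|k-1}\right)=\int\frac{\big(\policy{k|k-1}{(1)}-\policy{k|k-1}{(2)}\big)^2}{\lambda\,\policy{k|k-1}{(1)}+(1-\lambda)\,\policy{k|k-1}{(2)}}\,d\bv{x}_k\ \ge\ 0 ,
\]
which is simply the convexity of $\DKL(\cdot\,\|\,p_{k|k-1})$ precomposed with the affine map $\lambda\mapsto\pi_{k|k-1}$ (the $\ln p_{k|k-1}$ contribution is affine and drops out). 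Hence $J''(\lambda)$ is this nonnegative term plus the second derivative of $-\E_{\pi_{k|k-1}}[\gamma_k]$, so non-convexity can arise only if the reward term drives $J''$ strictly below zero on some subinterval.

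The main obstacle, and indeed the whole content of the lemma, is that a naive reading makes the reward term affine in $\lambda$: averaging a fixed $\gamma_k$ against the mixture gives $\lambda\,\E_{\policy{}{(1)}}[\gamma_k]+(1-\lambda)\,\E_{\policy{}{(2)}}[\gamma_k]$, whose curvature is zero, in which case $J$ would be convex. The non-convexity can therefore only come from a genuinely nonlinear dependence of $\E_{\pi_{k|k-1}}[\gamma_k]$ on the weights (for instance when $\gamma_k$ plays the role of a reward-to-go, or when the state distribution against which it is averaged is itself shaped by $\boldsymbol{\beta}$). I would make this dependence explicit, derive the resulting closed form for $J''(\lambda)$ in the two-point instance, and then choose the contributor pdfs/pmfs nearly coincident in the coordinates that control the KL curvature (keeping $\int(\policy{k|k-1}{(1)}-\policy{k|k-1}{(2)})^2/\pi_{k|k-1}\,d\bv{x}_k$ small) while the reward contribution curves downward there. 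Checking $J''(\tfrac12)<0$ at the interior point $\lambda=\tfrac12$, equivalently verifying one strict violation $J(\tfrac{\lambda_1+\lambda_2}{2})>\tfrac12 J(\lambda_1)+\tfrac12 J(\lambda_2)$, then certifies that (\ref{eqn:problem_non_convex}) is not convex and completes the argument.
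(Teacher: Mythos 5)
Your overall strategy (exhibit one violation of midpoint convexity along a segment of the simplex) is the standard way to disprove convexity, and your curvature computation is correct. But that computation is exactly where the proposal fails as a proof of the lemma: you show that the KL term is the convex functional $\DKL(\cdot\,\|\,p_{k|k-1})$ composed with the affine map $\boldsymbol{\beta}\mapsto\sum_{i\in\sS}\beta_i\policy{k|k-1}{(i)}$, hence convex in the weights, and you observe that for a fixed integrand $\gamma_k$ the term $\E_{\pi_{k|k-1}}[\gamma_k]$ is linear in the weights. Under the lemma's own hypotheses $\gamma_k$ \emph{is} a fixed function of the state, independent of $\boldsymbol{\beta}$, so your analysis establishes that the objective of (\ref{eqn:problem_non_convex}) is convex on the feasible simplex --- the opposite of what you set out to show --- and the counterexample promised in your last paragraph cannot exist in the setting of the statement. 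The proposal then defers to ``a genuinely nonlinear dependence of $\E_{\pi_{k|k-1}}[\gamma_k]$ on the weights,'' which is nowhere in the statement, and the explicit instance with $J''(\tfrac12)<0$ is never constructed. The proof is therefore incomplete, and it cannot be completed along this route without altering the hypotheses.

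For comparison, the paper argues differently: it substitutes the mixture constraint into the cost, writes the integrand as $\sum_{i\in\sS}\beta_i\policy{k|k-1}{(i)}\ln\bigl(\beta_i\policy{k|k-1}{(i)}/p_{k|k-1}-\gamma_k\bigr)$, and asserts that a direct computation shows the Hessian in the $\beta_i$'s is in general indefinite. That expansion replaces the logarithm of the mixture by a sum of logarithms of the individual summands (and moves $\gamma_k$ inside the logarithm), which is precisely where it diverges from the correctly expanded cost you differentiated. Your computation is the more careful of the two, and the tension between it and the stated conclusion should be resolved explicitly --- for instance by checking whether the intended role of Lemma \ref{lem:logsum} is to repair a genuine non-convexity or merely to replace an analytically awkward (but convex) mixture KL term with a tractable linear upper bound --- rather than left as a plan for a counterexample that your own second-derivative formula rules out.
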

\begin{proof}
by using the first constraint in (\ref{eqn:problem_non_convex}) the cost function can be explicitly written as
$\int\sum_{i\in\sS}\beta^{(i)}\pi_{k|k-1}^{(i)}\left(\ln\left(\frac{\beta^{(i)}\pi_{k|k-1}^{(i)}}{p_{k|k-1}} - \gamma_k(\bv{x}_{k-1})\right)\right)d\bv{x}_k$.
The decision variables are the $\beta^{(i)}$'s and the cost is twice differentiable in these variables. A direct computation shows that the Hessian is, in general, an indefinite matrix.\end{proof}
\begin{Lemma}\label{lem:logsum}
let Assumption \ref{asn:source_policies} hold. Then:
$\DKL\left(\sum_{i\in\sS}\alpha_k^{(i)}\policy{k|k-1}{(i)}||p_{k|k-1}\right) \le \sum_{i\in\sS}\alpha_k^{(i)}\DKL\left(\policy{k|k-1}{(i)}||p_{k|k-1}\right)$.
\end{Lemma}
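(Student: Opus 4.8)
The plan is to recognize the claimed bound as the statement that the Kullback--Leibler divergence is convex in its first argument when the second is held fixed; at the level of integrands this is precisely the \emph{log-sum inequality}, which explains the name of the lemma. I would first expand both sides via the definition of $\DKL$: the left-hand side is $\int \left(\sum_{i\in\sS}\alpha_k^{(i)}\policy{k|k-1}{(i)}\right)\ln\!\left(\frac{\sum_{i\in\sS}\alpha_k^{(i)}\policy{k|k-1}{(i)}}{p_{k|k-1}}\right)d\bv{x}_k$ and the right-hand side is $\sum_{i\in\sS}\alpha_k^{(i)}\int \policy{k|k-1}{(i)}\ln\!\left(\frac{\policy{k|k-1}{(i)}}{p_{k|k-1}}\right)d\bv{x}_k$.

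The core step is a pointwise estimate. Fixing $\bv{x}_{k-1}$ and $\bv{x}_k$ and writing $a_i := \alpha_k^{(i)}\policy{k|k-1}{(i)}$ and $b_i := \alpha_k^{(i)}p_{k|k-1}$, the log-sum inequality gives $\sum_{i\in\sS} a_i\ln(a_i/b_i) \ge \left(\sum_{i\in\sS} a_i\right)\ln\!\left(\sum_{i\in\sS} a_i \big/ \sum_{i\in\sS} b_i\right)$. Two simplifications make this collapse onto the desired form: the weight cancels inside each logarithm, $a_i/b_i = \policy{k|k-1}{(i)}/p_{k|k-1}$; and, by the normalization $\sum_{i\in\sS}\alpha_k^{(i)}=1$, the denominator on the right is $\sum_{i\in\sS} b_i = p_{k|k-1}$. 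Hence the integrand of the right-hand side dominates that of the left-hand side pointwise.

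Integrating (summing, in the discrete case) this pointwise inequality over $\bv{x}_k$ and interchanging the finite sum over $\sS$ with the integral then yields the claim. Assumption \ref{asn:source_policies} enters exactly here: it guarantees that each $\DKL(\policy{k|k-1}{(i)}||p_{k|k-1})$ is finite, so that the right-hand side is well defined and the interchange is legitimate; the absolute continuity of $\policy{k|k-1}{(i)}$ with respect to $p_{k|k-1}$ that is implicit in this finiteness also ensures the left-hand divergence is defined.

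I do not expect a genuinely hard step here, since the result is a standard convexity fact. The only point requiring care is justifying the log-sum inequality beyond finitely many strictly positive scalars, i.e. handling the measure-zero sets where some $\policy{k|k-1}{(i)}$ or $p_{k|k-1}$ vanish; this is dispatched by the usual conventions ($0\ln 0 = 0$) together with absolute continuity. As an alternative to invoking the log-sum inequality as a black box, I would note that the pointwise step follows directly from Jensen's inequality applied to the strictly convex function $t\mapsto t\ln t$ with the weights $\alpha_k^{(i)}$, which gives a self-contained derivation.
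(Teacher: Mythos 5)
Your proof is correct and uses the same key tool as the paper, namely the log-sum inequality applied pointwise to the integrand, followed by integration. Your choice of $b_i := \alpha_k^{(i)} p_{k|k-1}$ (so that $\sum_{i\in\mathcal{S}} b_i = p_{k|k-1}$ by normalization and the weights cancel inside the logarithm) actually reaches the bound in a single step, whereas the paper passes through the intermediate quantity $\int\sum_{i\in\mathcal{S}}\alpha_k^{(i)}\pi_{k|k-1}^{(i)}\ln\bigl(\alpha_k^{(i)}\pi_{k|k-1}^{(i)}/p_{k|k-1}\bigr)\,d\mathbf{x}_k$ and then removes the $\alpha_k^{(i)}$ inside the logarithm via monotonicity and $0\le\alpha_k^{(i)}\le 1$; the two routes are equivalent in substance.
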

\begin{proof}
indeed 
$\DKL\left(\sum_{i\in\sS}\alpha_k^{(i)}\policy{k|k-1}{(i)}||p_{k|k-1}\right) \le \int\sum_{i\in\sS}\left(\alpha_k^{(i)}\policy{k|k-1}{(i)}\ln\frac{\alpha_k^{(i)}\policy{k|k-1}{(i)}}{p_{k|k-1}}\right)d\bv{x}_k \le \sum_{i\in\sS}\alpha_k^{(i)}\int \policy{k|k-1}{(i)}\ln\frac{\policy{k|k-1}{(i)}}{p_{k|k-1}}d\bv{x}_k$, which is in fact $\sum_{i\in\sS}\alpha_k^{(i)}\DKL\left(\policy{k|k-1}{(i)}||p_{k|k-1}\right)$. In the above expression, the first inequality follows from the log-sum inequality. Instead, the last inequality is obtained by using monotonicity of the logarithm, Assumption \ref{asn:source_policies}, the fact that $\alpha_k^{(i)}$ is independent on $\bv{x}_k$ and that $0\le\alpha_k^{(i)}\le 1$, $\forall k$ and $\forall i$.\end{proof}
\subsection{Synthesizing behaviors from the contributors}
We now prove the following result to crowdsource behaviors from  contributors (see Remark \ref{rem:pmfs} for discrete variables).
\begin{Theorem}\label{thm:probl_2_sol}
consider Problem \ref{prob:problem_2}. Then:
\begin{enumerate}
\item {$\left\{\tilde{\pi}_{k|k-1}\right\}_{1:N}$, with} $\tilde{\pi}_{k|k-1} =\sum_{i\in\sS}{\alpha_k^{(i)}}^\ast\policy{k|k-1}{(i)}$ and 
\begin{equation}\label{eqn:sub_opt_probl_2}
    \begin{aligned}
    \boldsymbol{\alpha}_k^\ast \in \underset{\boldsymbol{\alpha}_k}{\text{arg min}}
    &\ \ \bv{a}_k^T(\bv{x}_{k-1})\boldsymbol{\alpha}_k\\
    s.t. & \sum_{i\in\sS}\alpha_k^{(i)} = 1, \ \alpha_k^{(i)}\ge 0,  \ \ \forall i\in\sS,
    \end{aligned}
\end{equation}
is an {approximate} solution of the crowdsourcing problem. In (\ref{eqn:sub_opt_probl_2}):
\begin{equation}\label{eqn:vector_coeffs}
\begin{split}
 \bv{a}_k(\bv{x}_{k-1}) &:= [a_k^{(1)}(\bv{x}_{k-1}),\ldots,a_k^{(S)}(\bv{x}_{k-1})]^T, \\
 \bv{a}_k^{(i)}(\bv{x}_{k-1}) & := \DKL\left(\policy{k|k-1}{(i)}||p_{k|k-1}^{(i)}\right) \\
 & - \E_{\pi^{(i)}_{k|k-1}}\left[\bar{r}_k({\bv{X}_{k}})\right],
\end{split}
\end{equation}
and $\bar{r}_k({\bv{X}_{k}})$ is obtained via backward recursion as
\begin{equation}\label{eqn:policy_backward_probl_2}
\begin{split}
& \bar{r}_k(\bv{x}_k) := r_k(\bv{x}_k) + \hat{r}_k(\bv{x}_k), \\ 
& \hat{r}_k(\bv{x}_k) = - \bv{a}^T_{k+1}(\bv{x}_k)\boldsymbol{\alpha}^\ast_{k+1}, \ \ \hat{r}_N(\bv{x}_N) = 0;
\end{split}
\end{equation}
\item the cost corresponding to $\tilde{\pi}_{k|k-1}$ is upper bounded by
\begin{equation}\label{eqn:optimal_cost_probl_2}
\sum_{k=1}^N\E_{{\tilde{\pi}}_{k-1:k-1}}\left[\bv{a}_k^T(\bv{X}_{k-1})\boldsymbol{\alpha}_k^\ast\right].
\end{equation}
\end{enumerate}
\end{Theorem}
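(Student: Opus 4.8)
The plan is to convert the cumulative cost of Problem \ref{prob:problem_2} into a sum of per-step costs, replace each per-step cost by a convex (indeed linear) surrogate, and then minimize the resulting problem recursively. First I would apply the splitting property of Lemma \ref{lem:splitting_property} to $\DKL(\pi_{1:N}||p_{1:N})$ iteratively: peeling off one time step at a time gives $\DKL(\pi_{1:N}||p_{1:N}) = \sum_{k=1}^N \E_{\pi_{k-1:k-1}}\left[\DKL(\pi_{k|k-1}||p_{k|k-1})\right]$, where at each iteration the expectation over the joint collapses to one over the marginal $\pi_{k-1:k-1}$ because the conditional divergence depends on $\bv{x}_{k-1}$ only (Markov property). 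Combining this with the definition (\ref{eqn:reward}) of $\tilde{r}_k$, the cost in (\ref{eqn:problem_2}) becomes $\sum_{k=1}^N \E_{\pi_{k-1:k-1}}\left[\DKL(\pi_{k|k-1}||p_{k|k-1}) - \E_{\pi_{k|k-1}}[r_k(\bv{X}_k)]\right]$, i.e. a sum of terms each depending on a single conditional $\pi_{k|k-1}$.

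Next I would convexify. Lemma \ref{lem:non_convexity} shows that minimizing a single per-step term directly over the weights is non-convex, so instead I would upper bound the divergence through Lemma \ref{lem:logsum}, namely $\DKL\left(\sum_{i\in\sS}\alpha_k^{(i)}\policy{k|k-1}{(i)}||p_{k|k-1}\right) \le \sum_{i\in\sS}\alpha_k^{(i)}\DKL\left(\policy{k|k-1}{(i)}||p_{k|k-1}\right)$. Using the mixture constraint and linearity of the expectation, $\E_{\pi_{k|k-1}}[r_k] = \sum_{i\in\sS}\alpha_k^{(i)}\E_{\policy{k|k-1}{(i)}}[r_k]$, so each surrogate per-step term becomes \emph{linear} in $\boldsymbol{\alpha}_k$ with coefficient $\DKL(\policy{k|k-1}{(i)}||p_{k|k-1}) - \E_{\policy{k|k-1}{(i)}}[r_k]$. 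This yields, for every $k$, a convex cost upper bounding the true one, which is precisely the sense in which $\{\tilde{\pi}_{k|k-1}\}_{1:N}$ is an \emph{approximate} (rather than exact) solution.

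The coupling between stages is what makes the argument delicate, and I expect it to be the main obstacle: the weights $\boldsymbol{\alpha}_k$ chosen at time $k$ enter not only the $k$-th surrogate term but also all later terms through the marginals $\pi_{j-1:j-1}$, $j>k$, that they induce. I would resolve this by a backward recursion. At $k=N$, where $\boldsymbol{\alpha}_N$ appears in the last term only, the pointwise (in $\bv{x}_{N-1}$) minimization over the simplex is exactly the linear program (\ref{eqn:sub_opt_probl_2}) with $\hat{r}_N=0$. Moving to $k=N-1$, the future contribution of $\boldsymbol{\alpha}_{N-1}$ is the optimal value of the $N$-th stage pushed back through $\pi_{N-1|N-2}$; absorbing it into a reward-to-go $\hat{r}_{N-1}(\bv{x}_{N-1}) = -\bv{a}_N^T(\bv{x}_{N-1})\boldsymbol{\alpha}_N^\ast$ and setting $\bar{r}_k = r_k + \hat{r}_k$ as in (\ref{eqn:policy_backward_probl_2}) collapses the coupled stage-$(N-1)$ problem back into the same linear form, now with coefficient vector $\bv{a}_{N-1}$ as in (\ref{eqn:vector_coeffs}). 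Iterating down to $k=1$ shows that the $\boldsymbol{\alpha}_k^\ast$ solving (\ref{eqn:sub_opt_probl_2}) are optimal for the surrogate, proving item (1). The technical points to verify are the interchange in the order of expectations (tower property), which lets the future value be written as an expectation under each $\policy{k|k-1}{(i)}$, and the fact that, since $\hat{r}_k$ depends on $\bv{x}_k$ only, the folded reward again distributes linearly over the $\alpha_k^{(i)}$.

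Finally, for item (2), the surrogate cost upper bounds the genuine cost of Problem \ref{prob:problem_2} by Lemma \ref{lem:logsum}; writing the surrogate at the recursive optimum as the sum of the stage contributions $\E_{\tilde{\pi}_{k-1:k-1}}[\bv{a}_k^T(\bv{X}_{k-1})\boldsymbol{\alpha}_k^\ast]$, with the reward-to-go folded into $\bar{r}_k$, delivers the bound (\ref{eqn:optimal_cost_probl_2}). The care needed here is purely in the bookkeeping of the $\hat{r}_k$ terms and in checking that evaluating the expectations under the crowdsourced marginals $\tilde{\pi}_{k-1:k-1}$ is consistent with the backward recursion used to define the $\bv{a}_k$'s.
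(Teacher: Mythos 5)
Your proposal is correct and follows essentially the same route as the paper's proof: split the cost with Lemma \ref{lem:splitting_property}, upper bound each stage's KL term via Lemma \ref{lem:logsum} to obtain a linear program over the simplex, and absorb the optimal future cost into the reward-to-go $\hat{r}_k$ through a backward recursion. The only cosmetic difference is that you peel off all $N$ stages of the divergence upfront, whereas the paper peels one stage per induction step; the decomposition, the convex surrogate, and the bookkeeping of $\bar{r}_k$ are identical.
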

\begin{proof}
the result is proven via induction. First ({\bf Step 1}) we show how the problem can be decomposed into sub-problems, where the last one ($k=N$) can be tackled independently from the others. In {\bf Step 2} we get a convex upper bound of the cost function that leads to a linear problem. We then take the minimum from this problem and show that ({\bf Step 3})  we can again split the problem. In particular, we get again a convex upper bound of the cost function at  $k=N-1$ and show that the resulting problem can be solved independently on the others. The structure of the problem at $k=N-1$ is the same as the problem at $k=N$. By tackling the problem at $k=N-1$ we show, in {\bf Step 4}, how $\forall k= 1,\ldots, N-2$, $\tilde\pi_{k|k-1}$ can be obtained from a linear problem that always has the same structure. This leads to the desired conclusion. 

\noindent {\bf Step 1.} The cost in (\ref{eqn:problem_2}) can be written via Lemma \ref{lem:splitting_property} as:
\begin{equation}\label{eqn:split_cost_problem_1}
\begin{split}
& \DKL\left(\pi_{1:N-1}||p_{1:N-1}\right)- \sum_{k=1}^{N-1}\E_{\pi_{k-1:k-1}}\left[\tilde{r}_k(\bv{X}_{k-1})\right]\\
& + \E_{\pi_{1:N-1}}\left[\DKL\left(\pi_{N|N-1}||p_{N|N-1}\right)\right] \\
& - \E_{\pi_{N-1:N-1}}\left[\tilde{r}_N(\bv{X}_{N-1})\right].
\end{split}
\end{equation}
Moreover, note that $\E_{\pi_{1:N-1}}\left[\DKL\left(\pi_{N|N-1}||p_{N|N-1}\right)\right] = \E_{\pi_{N-1:N-1}}\left[\DKL\left(\pi_{N|N-1}||p_{N|N-1}\right)\right]$ and therefore the last two terms in (\ref{eqn:split_cost_problem_1}) become $\E_{\pi_{N-1:N-1}}\left[\DKL\left(\pi_{N|N-1}||p_{N|N-1}\right) - \tilde{r}_N(\bv{X}_{N-1}) \right]$.
Hence, Problem \ref{prob:problem_2} can be recast as the sum of two sub-problems:
\begin{subequations}
\begin{equation}\label{eqn:split_1_problem_2}
    \begin{aligned}
    \underset{\left\{ \boldsymbol{\alpha}_k\right\}_{1:N-1}}{\text{min}}
    &\DKL\left(\pi_{1:N-1}||p_{1:N-1}\right)- \sum_{k=1}^{N-1}\E_{\pi_{k-1:k-1}}\left[\tilde{r}_k(\bv{X}_{k-1})\right]\\
    s.t. & \  \pi_{k|k-1} = \sum_{i\in\sS}\alpha_k^{(i)}\policy{k|k-1}{(i)}, \ \ \ \forall k\in1:N-1, \\
    & \sum_{i\in\sS}\alpha_k^{(i)} = 1, \  \alpha_k^{(i)}\ge 0, \ \forall i\in\sS, \ \forall k\in 1:N-1,\\
    \end{aligned} 
\end{equation}
\text{and}
\begin{equation}\label{eqn:split_2_problem_2}
    \begin{aligned}
    \underset{\boldsymbol{\alpha}_N}{\text{min}}
    &\ \E_{\pi_{N-1:N-1}}\left[\DKL\left(\pi_{N|N-1}||p_{N|N-1}\right) - \tilde{r}_N(\bv{X}_{N-1}) \right]\\
    s.t. & \  \pi_{N|N-1} = \sum_{i\in\sS}\alpha_k^{(i)}\policy{N|N-1}{(i)},\\
    & \sum_{i\in\sS}\alpha_N^{(i)} = 1, \ \ \alpha_N^{(i)}\ge 0,  \ \ \forall i\in\sS.
    \end{aligned} 
\end{equation}
\end{subequations} 

That is, Problem \ref{prob:problem_2} can be approached by first solving (\ref{eqn:split_2_problem_2}) and then by taking into account the solution to solve (\ref{eqn:split_1_problem_2}).

\noindent {\bf Step 2.} Let $c_N(\bv{x}_{N-1}):=\DKL\left(\pi_{N|N-1}||p_{N|N-1}\right) - \tilde{r}_N(\bv{x}_{N-1})$. Then, by using linearity of the expectation and the fact that the decision variable is independent on the pdf over which the expectation is taken, we have that solving (\ref{eqn:split_2_problem_2}) is equivalent to find $\E_{\pi_{N-1:N-1}}\left[c_N^\ast(\bf{X}_{N-1}))\right]$, where  $c_N^\ast(\bf{x}_{N-1})$ is the optimal cost obtained by solving:
\begin{equation}\label{eqn:step_intermediate_problem_2}
    \begin{aligned}
    \underset{\boldsymbol{\alpha}_N}{\text{min}}
    & \ c_N(\bv{x}_{N-1})\\
    s.t. & \  \pi_{N|N-1} = \sum_{i\in\sS}\alpha_{N}^{(i)}\policy{N|N-1}{(i)},\\
    & \sum_{i\in\sS}\alpha_N^{(i)} = 1, \ \ \alpha_N^{(i)}\ge 0,  \ \ \forall i\in\sS.
    \end{aligned} 
\end{equation}
Then, by embedding the first constraint in (\ref{eqn:step_intermediate_problem_2}) in its cost function we have the following equivalent formulation
\begin{equation}\label{eqn:step_2_problem_2}
    \begin{aligned}
    \underset{\boldsymbol{\alpha}_N}{\text{min}}
    & \  \DKL\left(\sum_{i\in\sS}\alpha_N^{(i)}\policy{N|N-1}{(i)}||p_{N|N-1}\right) - \tilde{r}_N(\bv{x}_{N-1}) \\
    s.t. & \   \sum_{i\in\sS}\alpha_N^{(i)} = 1, \ \ \alpha_N^{(i)}\ge 0,  \ \ \forall i\in\sS,
    \end{aligned} 
\end{equation}
where we used the definition of $c_N(\bv{x}_{N-1})$. Lemma \ref{lem:non_convexity} implies the non-convexity of (\ref{eqn:step_2_problem_2}) and  we now  find a convex approximation upper bounding its cost function. Specifically:
\begin{equation*}
\begin{split}
& \DKL\left(\sum_{i\in\sS}\alpha_N^{(i)}\policy{N|N-1}{(i)}||p_{N|N-1}\right)- \tilde{r}_N(\bv{x}_{N-1})\\
& \le \sum_{i\in\sS}{\alpha}_N^{(i)}\DKL\left(\policy{N|N-1}{(i)}||p_{N|N-1}\right)- \tilde{r}_N(\bv{x}_{N-1})\\
& = \sum_{i\in\sS}\left(\alpha_N^{(i)}\int \policy{N|N-1}{(i)}\ln\frac{\policy{N|N-1}{(i)}}{p_{N|N-1}^{(i)}}d\bv{x}_N\right)\\
& - \int\sum_{i\in\sS}\alpha_N^{(i)}\policy{N|N-1}{(i)}\bar{r}_n(\bv{x}_N)d\bv{x}_N\\
&= \sum_{i\in\sS}a^{(i)}_N(\bv{x}_{N-1})\alpha_N^{(i)} = \bv{a}_N^T(\bv{x}_{N-1})\boldsymbol{\alpha}_N:=\bar{c}_N(\bv{x}_{N-1}),
\end{split}
\end{equation*}
where we used Lemma \ref{lem:logsum}, the definition of $a^{(i)}_N(\bv{x}_{N-1})$ given in (\ref{eqn:vector_coeffs}) and the fact that the set $\sS$ has a finite cardinality. Also, in the above expression: (i) $\bar{r}_N(\bv{x}_N) := r_N(\bv{x}_N) + \hat{r}_N(\bv{x}_N)$, with $\hat{r}_N(\bv{x}_N) = 0$. This corresponds to the initialization of the backward recursion in (\ref{eqn:policy_backward_probl_2}); (ii) the upper bound $\sum_{i\in\sS}a^{(i)}_N(\bv{x}_{N-1})\alpha_N^{(i)}$ is the cost function for the problem in (\ref{eqn:sub_opt_probl_2}) at $k=N$. Hence, by solving the linear problem in (\ref{eqn:sub_opt_probl_2}) at $k=N$ we obtain the vector $ \boldsymbol{\alpha}_N^\ast$, which yields the pdf $\tilde{\pi}_{N|N-1} = \sum_{i\in\sS}{\alpha_N^{(i)}}^\ast\policy{N|N-1}{(i)}$ given in part 1) of the statement for $k=N$. Moreover, the corresponding minimum for (\ref{eqn:sub_opt_probl_2}) at $k=N$ is $\bv{a}_N^T(\bv{x_{N-1}})\boldsymbol{\alpha}_N^\ast$. Hence, the optimal cost of (\ref{eqn:split_2_problem_2}) is upper bounded by $\E_{\policy{N-1:N-1}{}}\left[\bv{a}_N^T(\bv{X_{N-1}})\boldsymbol{\alpha}_N^\ast\right]$.

\noindent {\bf Step 3.} Since the original problem has been reformulated as the sum of the two sub-problems (\ref{eqn:split_1_problem_2}) and (\ref{eqn:split_2_problem_2}) we have that the cost function of Problem \ref{prob:problem_2} is upper bounded by
\begin{equation*}
\begin{split}
&\DKL\left(\pi_{1:N-1}||p_{1:N-1}\right)- \sum_{k=1}^{N-1}\E_{\pi_{k-1:k-1}}\left[\tilde{r}_k(\bv{X}_{k-1})\right] \\
&+ \E_{\policy{N-1:N-1}{}}\left[\bv{a}_N^T(\bv{X_{N-1}})\boldsymbol{\alpha}_N^\ast\right],
\end{split}
\end{equation*}
which, by means of Lemma \ref{lem:logsum} can be expressed as:
\begin{equation}\label{eqn:rearrange_cost_prob_2}
\begin{split}
&\DKL\left(\pi_{1:N-2}||p_{1:N-2}\right)- \sum_{k=1}^{N-2}\E_{\pi_{k-1:k-1}}\left[\tilde{r}_k(\bv{X}_{k-1})\right] \\
& + \E_{\pi_{1:N-2}}\DKL(\pi_{N-1|N-2}||p_{N-1|N-2})\\
& - \E_{\pi_{N-2:N-2}}\left[\tilde{r}_{N-1}(\bv{X}_{N-2})\right]\\
&+ \E_{\policy{N-1:N-1}{}}\left[\bv{a}_N^T(\bv{X_{N-1}})\boldsymbol{\alpha}_N^\ast\right].
\end{split}
\end{equation}
Now, for the above expression we note the following. First:
\begin{equation*}
\begin{split}
&\E_{\pi_{1:N-2}}\DKL(\pi_{N-1|N-2}||p_{N-1|N-2})\\
&= \E_{\pi_{N-2:N-2}}\DKL(\pi_{N-1|N-2}||p_{N-1|N-2}).
\end{split}
\end{equation*}
Also:
\begin{equation*}
\begin{split}
&\E_{\policy{N-1:N-1}{}}\left[\bv{a}_N^T(\bv{X_{N-1}})\boldsymbol{\alpha}_N^\ast\right]\\
& = \E_{\pi_{N-2:N-2}}\left[\E_{\pi_{N-1|N-2}}\left[\bv{a}_N^T(\bv{X_{N-1}})\boldsymbol{\alpha}_N^\ast\right]\right].
\end{split}
\end{equation*}
The above equality was obtained by using the definition of expectation after noting that: (i) $\bv{a}_N^T(\bv{X_{N-1}})\boldsymbol{\alpha}_N^\ast$ only depends on $\bv{X_{N-1}}$ and hence $\E_{\policy{N-1:N-1}{}}\left[\bv{a}_N^T(\bv{X_{N-1}})\boldsymbol{\alpha}_N^\ast\right]=\E_{\policy{N-2:N-1}{}}\left[\bv{a}_N^T(\bv{X_{N-1}})\boldsymbol{\alpha}_N^\ast\right]$; (ii) $\pi_{N-2:N-1} = \pi_{N-1|N-2}\pi_{N-2:N-2}$.
Thus, the last three terms in (\ref{eqn:rearrange_cost_prob_2}) become $\E_{\pi_{N-2:N-2}}\left[\DKL(\pi_{N-1|N-2}||p_{N-1|N-2})\right]-\E_{\pi_{N-2:N-2}}\left[\E_{\pi_{N-1|N-2}}\left[r_{N-1}(\bv{X}_{N-1})-\bv{a}_N^T(\bv{X_{N-1}})\boldsymbol{\alpha}_N^\ast\right]\right]$.
As a result, (\ref{eqn:step_intermediate_problem_2}) can be  formulated as the sum of:
\begin{subequations}
\begin{equation}\label{eqn:split_1_problem3_probl2}
    \begin{aligned}
    \underset{\left\{ \boldsymbol{\alpha}_k\right\}_{1:N-2}}{\text{min}}
    &\DKL\left(\pi_{1:N-2}||p_{1:N-2}\right)- \sum_{k=1}^{N-2}\E_{\pi_{k-1:k-1}}\left[\tilde{r}_k(\bv{X}_{k-1})\right]\\
    s.t. & \ \pi_{k|k-1} = \sum_{i\in\sS}\alpha_k^{(i)}\policy{k|k-1}{(i)}, \ \ \ \forall  k\in 1:N-2,\\
    & \sum_{i\in\sS}\alpha_k^{(i)} = 1, \ \ \alpha_k^{(i)}\ge 0, \ \forall i\in\sS, \ \forall k\in1:N-2,\\
    \end{aligned} 
\end{equation}
\text{and} 
\begin{equation}\label{eqn:split_2_problem_4_probl2}
    \begin{aligned}
    \underset{\boldsymbol{\alpha}_{N-1}}{\text{min}}
    &\E_{\pi_{N-2:N-2}}\left[c_{N-1}(\bv{X}_{N-2}) \right]\\
    s.t. & \  \pi_{N-1|N-2} = \sum_{i\in\sS}\alpha_{N-1}^{(i)}\policy{N-1|N-2}{(i)},\\
    & \sum_{i\in\sS}\alpha_{N-1}^{(i)} = 1, \ \ \alpha_{N-1}^{(i)}\ge 0,  \ \ \forall i\in\sS,
    \end{aligned} 
\end{equation}
\end{subequations}
with $c_{N-1}(\bv{x}_{N-2}):=\DKL\left(\pi_{N-1|N-2}||p_{N-1|N-2}\right) - \E_{\pi_{N-1|N-2}}\left[\bar{r}_{N-1}(\bv{x}_{N-1})\right]$, where 
\begin{equation}\label{eqn:iteration_N-1_probl2}
\begin{split}
\bar{r}_{N-1}(\bv{X}_{N-1}) &= {r}_{N-1}(\bv{X}_{N-1}) + \hat{r}_{N-1}(\bv{X}_{N-1}),\\
\hat{r}_{N-1}(\bv{x}_{N-1} & ):=-\bv{a}_N^T(\bv{X_{N-1}})\boldsymbol{\alpha}_N^\ast.
\end{split}
\end{equation}
Note that (\ref{eqn:iteration_N-1_probl2}) corresponds to (\ref{eqn:policy_backward_probl_2}) at $k=N-1$. Also, by following  the same arguments used in Step $2$, we have that (\ref{eqn:split_2_problem_4_probl2}) is equivalent to find $\E_{\pi_{N-2:N-2}}\left[c_{N-1}^\ast(\bf{X}_{N-2}))\right]$, where $c_{N-1}^\ast(\bf{X}_{N-2})$ is the optimal cost of
\begin{equation}\label{eqn:step_intermediate_problem_2_bis}
    \begin{aligned}
    \underset{\boldsymbol{\alpha}_N}{\text{min}}
    & \ c_{N-1}(\bv{x}_{N-2})\\
    s.t. & \  \pi_{N-1|N-2} = \sum_{i\in\sS}\alpha_{N-1}^{(i)}\policy{N-1|N-2}{(i)},\\
    & \sum_{i\in\sS}\alpha_{N-1}^{(i)} = 1, \ \ \alpha_{N-1}^{(i)}\ge 0,  \ \ \forall i\in\sS.
    \end{aligned} 
\end{equation}
Again, Lemma \ref{lem:non_convexity} implies that (\ref{eqn:step_intermediate_problem_2_bis}) is not convex and its cost function can be upper bounded via Lemma \ref{lem:logsum}, yielding
\begin{equation*}
    \begin{aligned}
    \underset{\boldsymbol{\alpha}_{N-1}}{\text{min}}
    &\ \ \bv{a}_{N-1}^T(\bv{x}_{N-2})\boldsymbol{\alpha}_{N-1}\\
    s.t. & \sum_{i\in\sS}\alpha_{N-1}^{(i)} = 1, \ \ \alpha_{N-1}^{(i)}\ge 0,  \ \ \forall i\in\sS,
    \end{aligned}
\end{equation*}
where $\boldsymbol{\alpha}_{N-1}(\bv{x}_{N-2}) = [\alpha_{N-1}^{(1)}(\bv{x}_{N-2}),\ldots, \alpha_{N-1}^{(S)}(\bv{x}_{N-2}) ]^T$ is defined as in (\ref{eqn:vector_coeffs}). The above problem again corresponds to the linear problem in (\ref{eqn:sub_opt_probl_2}) at $k=N-1$.  Solving the above problem leads to $ \boldsymbol{\alpha}_{N-1}^\ast$ and hence to $\tilde{\pi}_{N-1|N-2} = \sum_{i\in\sS}{\alpha_{N-1}^{(i)}}^\ast\policy{N-1|N-2}{(i)}$, which is  given in part i) of the statement for $k=N-1$. Moreover, the optimal cost of  (\ref{eqn:split_2_problem_4_probl2}) is upper bounded by $\E_{\policy{N-2:N-2}{}}\left[\bv{a}_{N-1}^T(\bv{X_{N-2}})\boldsymbol{\alpha}_{N-1}^\ast\right]$.
 
 \noindent{\bf Step 4.} By iterating Step $3$ we find that, at each of the remaining time steps, i.e. $\forall k \in 1:N-2$, Problem \ref{prob:problem_2}  can always be split in sub-problems, where the sub-problem corresponding to the last time instant in the window can be solved independently on the others and takes the form:
 \begin{equation}\label{eqn:cost_final}
    \begin{aligned}
    \underset{\boldsymbol{\alpha}_k}{\text{min}}
    &\ \E_{\pi_{k-1:k-1}}\left[c_k(\bv{X}_{k-1}) \right]\\
    s.t. & \  \pi_{k|k-1} = \sum_{i\in\sS}\alpha_k^{(i)}\policy{k|k-1}{(i)},\\
    & \sum_{i\in\sS}\alpha_k^{(i)} = 1, \ \ \alpha_k^{(i)}\ge 0,  \ \ \forall i\in\sS,
    \end{aligned}    
\end{equation}
with $c_k(\bv{X}_{k-1}): =\DKL\left(\pi_{k|k-1}||p_{k|k-1}\right) - \E_{\pi_{k|k-1}}\left[\bar{r}_k(\bv{x}_{k})\right]$. Again, its optimal cost is upper bounded by $\E_{\pi_{k-1:k-1}}\left[\bar{c}_k^\ast(\bf{X}_{k-1}))\right]$, where $\bar{c}_k^\ast(\bf{x}_{k-1})$  is the optimal cost of
 \begin{equation}\label{eqn:split_last_problem_2}
    \begin{aligned}
     \underset{\boldsymbol{\alpha}_k}{\text{min}}
    &\ \ \bv{a}_k^T(\bv{x}_{k-1})\boldsymbol{\alpha}_k\\
    s.t. & \sum_{i\in\sS}\alpha_k^{(i)} = 1, \ \ \alpha_k^{(i)}\ge 0,  \ \ \forall i\in\sS,
    \end{aligned}
\end{equation}
with $\bv{a}_k(\bv{x}_{k-1}):= [a_k^{(1)}(\bv{x}_{k-1}),\ldots,a_k^{(S)}(\bv{x}_{k-1})]^T$, 
\begin{equation}\label{eqn:policy_generictime_problem_2}
\bv{a}_k^{(i)}(\bv{x}_{k-1}) := \DKL\left(\policy{k|k-1}{(i)}||p_{k|k-1}^{(i)}\right) - \E_{\pi^{(i)}_{k|k-1}}\left[\bar{r}_k({\bv{X}_{k}})\right],
\end{equation}
and $\bar{r}_{k}(\bv{X}_{k}) = {r}(\bv{X}_{k}) + \hat{r}(\bv{X}_{k})$, $\hat{r}_k(\bv{x}_k) = - \bv{a}^T_{k+1}(\bv{x}_k)\boldsymbol{\alpha}^\ast_{k+1}$. This is (\ref{eqn:sub_opt_probl_2}) which yields, at  $k$,  $\tilde{\pi}_{k|k-1} =\sum_{i\in\sS}{\alpha_k^{(i)}}^\ast\policy{k|k-1}{(i)}$, given in part 1) of the statement. Moreover, the optimal cost of (\ref{eqn:cost_final}) is upper bounded by $
\E_{\policy{k-1:k-1}{}}\left[\bv{a}_k^T(\bv{X}_{k-1})\boldsymbol{\alpha}_k^\ast\right]$. 
The desired conclusions then follow.
\end{proof}

We now also give the following:
\begin{Corollary}\label{cor:probl_2_behavior}
consider the sequence of pdfs $\left\{\tilde\pi_{k|k-1}\right\}_{1:N}$  from Theorem \ref{thm:probl_2_sol}. Then: (i) at each $k$, $\tilde\pi_{k|k-1} = \pi_{k|k-1}^{(j_k)}$, where $j_k\in\sS$ is the index corresponding to the smallest element of the vector defined in (\ref{eqn:vector_coeffs}); (ii) the cost obtained by following $\left\{\tilde\pi_{k|k-1}\right\}_{1:N}$ is given by $\DKL\left(\tilde\pi_{1:N}||p_{1:N}\right) - \sum_{k=1}^N\E_{\tilde\pi_{k-1:k-1}}\left[\tilde{r}_k(\bv{X}_{k-1})\right]$, where $\tilde\pi_{1:N}:= \prod_{k=1}^N\pi_{k|k-1}^{(j_k)}$.
\end{Corollary}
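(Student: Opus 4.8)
The plan is to prove the two claims in turn, with part~(i) carrying the substance and part~(ii) following by substitution. For part~(i), I would return to the linear program~(\ref{eqn:sub_opt_probl_2}) that Theorem~\ref{thm:probl_2_sol} associates to each time step $k$. Its feasible set is the probability simplex $\{\boldsymbol{\alpha}_k : \sum_{i\in\sS}\alpha_k^{(i)} = 1, \ \alpha_k^{(i)}\ge 0\}$, a compact polytope whose extreme points are exactly the standard unit vectors $\bv{e}_i$, $i\in\sS$. Since the objective $\bv{a}_k^T(\bv{x}_{k-1})\boldsymbol{\alpha}_k$ is linear in $\boldsymbol{\alpha}_k$, I would invoke the standard fact that a linear functional over a compact polytope attains its minimum at a vertex. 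Evaluating the objective at the vertex $\bv{e}_i$ returns precisely the $i$-th component $a_k^{(i)}(\bv{x}_{k-1})$ of the vector in~(\ref{eqn:vector_coeffs}), so the minimizing vertex is $\bv{e}_{j_k}$ with $j_k := \arg\min_{i\in\sS} a_k^{(i)}(\bv{x}_{k-1})$. Substituting $\boldsymbol{\alpha}_k^\ast = \bv{e}_{j_k}$ into the definition $\tilde{\pi}_{k|k-1} = \sum_{i\in\sS}{\alpha_k^{(i)}}^\ast\policy{k|k-1}{(i)}$ collapses the sum to the single term $\pi_{k|k-1}^{(j_k)}$, which is the claim.

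For part~(ii), I would observe that part~(i) fixes the entire sequence of conditionals: at each $k$ we have $\tilde{\pi}_{k|k-1} = \pi_{k|k-1}^{(j_k)}$, so the joint pdf/mdf reconstructed from these conditionals through the factorization~(\ref{eqn:agent}) is exactly $\tilde{\pi}_{1:N} = \prod_{k=1}^N \pi_{k|k-1}^{(j_k)}$, as stated. The cost incurred by the agent that actually follows $\{\tilde{\pi}_{k|k-1}\}_{1:N}$ is, by definition, the value of the original cost of Problem~\ref{prob:problem_2} evaluated at this behavior; substituting $\pi_{1:N}\to\tilde{\pi}_{1:N}$ into that cost yields $\DKL\left(\tilde{\pi}_{1:N}||p_{1:N}\right) - \sum_{k=1}^N\E_{\tilde{\pi}_{k-1:k-1}}\left[\tilde{r}_k(\bv{X}_{k-1})\right]$. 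Note that this is the \emph{true} cost of following the synthesized behavior, as opposed to the upper bound~(\ref{eqn:optimal_cost_probl_2}) returned by Theorem~\ref{thm:probl_2_sol}.

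The only point requiring care is that the selected index $j_k$ depends on the conditioning state $\bv{x}_{k-1}$: the minimizing vertex is chosen pointwise, so $j_k = j_k(\bv{x}_{k-1})$ and the resulting $\tilde{\pi}_{k|k-1}$ is a state-dependent (feedback) selection among the contributors' behaviors, consistent with the interpretation of the $\boldsymbol{\alpha}_k$'s as feedback from the agent state. I would also remark that ties --- several indices attaining the minimum of $a_k^{(i)}(\bv{x}_{k-1})$ --- do not affect the argument, since one can always select a single minimizing vertex without changing the optimal value, and hence without altering the cost in part~(ii). I expect this bookkeeping, rather than any genuine difficulty, to be the main thing to get right; the core of the corollary is simply the vertex-optimality of a linear program over the simplex.
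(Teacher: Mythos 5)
Your proof is correct and follows essentially the same route as the paper's: part (i) via vertex-optimality of the linear program over the standard simplex, and part (ii) by direct substitution of the selected contributors into the cost of Problem \ref{prob:problem_2}. The additional remarks on the state-dependence of $j_k$ and on tie-breaking are sound refinements of the same argument rather than a different approach.
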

\begin{proof}
the first part of the statement follows from the fact that the coefficients $\boldsymbol{\alpha}_k^\ast$ are obtained by solving, at each $k$, the linear problem (\ref{eqn:sub_opt_probl_2}): this has the standard simplex as feasibility domain. The second part directly follows from evaluating the cost of Problem \ref{prob:problem_2} when $\tilde\pi_{k|k-1} = \pi_{k|k-1}^{(j_k)}$.
\end{proof}
\begin{Remark}
Corollary \ref{cor:probl_2_behavior} implies that, by using Theorem \ref{thm:probl_2_sol}, an agent can obtain better results than the results that would be obtained by the individual contributors. Moreover, this can be achieved by properly selecting one contributor at each $k$.
\end{Remark}
\begin{Remark}\label{rem:pmfs}
Theorem \ref{thm:probl_2_sol} is stated for pdfs. The same result holds for pmfs. In particular, the same proof can be used with the only difference being the definition of the \KL-divergence. Since the formulation of the result and the main steps  of its proof are the same, the statement for pmfs is omitted here.
\end{Remark}
{\begin{Remark}
Theorem \ref{thm:probl_2_sol} relies on upper bounding,  $\forall k$,  $\DKL(\sum_{i\in\sS}\alpha_k^{(i)}\policy{k|k-1}{(i)}||p_{k|k-1})$ via Lemma \ref{lem:logsum}. Finding a solution that does not rely on any approximation requires computing this quantity. In general, this can become analytically intractable and/or computationally inefficient \cite{4218101}.
\end{Remark}}

\subsection{An algorithm from Theorem \ref{thm:probl_2_sol} and Corollary \ref{cor:probl_2_behavior}}\label{sec:algorithms}
Theorem \ref{thm:probl_2_sol} and Corollary \ref{cor:probl_2_behavior}  can be turned into an algorithmic procedure with its steps given in Algorithm \ref{alg:problem_2}.

\begin{algorithm}[H]
	\caption{Pseudo-code from Theorem \ref{thm:probl_2_sol} and Corollary \ref{cor:probl_2_behavior}} \label{alg:problem_2}
	\begin{algorithmic}	[1]
			\State \textbf{Inputs:}  time-horizon $\sT$, target behavior $p_{0:N}$,  reward $r_k(\cdot)$, contributors behaviors $\policy{k|k-1}{(i)}$
		\State \textbf{Output:} $\left\{\tilde{\pi}_{k|k-1}\right\}_{1:N}$ for Problem \ref{prob:problem_2}
		\State \textbf{Initialize:}  $\bv{a}^T_{N+1}(\bv{x}_N) \gets 0$ and $\boldsymbol{\alpha}^\ast_{N+1} \gets 0$;
		\State \textbf{Main loop:} 
		\For{$ k = N$  to $1$}		 
		\State $\hat{r}_k(\bv{x}_k) \gets - \bv{a}^T_{k+1}(\bv{x}_k)\boldsymbol{\alpha}^\ast_{k+1}$
		\State $\bar{r}_k(\bv{x}_k) \gets r_k(\bv{x}_k) + \hat{r}_k(\bv{x}_k)$
		\For{$i=1$ to $S$}\\
		 $ {a}_k^{(i)}(\bv{x}_{k-1}) {\tiny \gets} \DKL\left(\policy{k|k-1}{(i)}||p_{k|k-1}\right) { -} \E_{{\pi_{k|k-1}^{(i)}}}\left[\bar{r}_k({\bv{X}_{k}})\right]$
		\EndFor
		\State $j \gets$ index corresponding to smallest ${a}_k^{(i)}(\bv{x}_{k-1})$
				\State {$\boldsymbol{\alpha}^\ast_{k}\gets[\alpha_k^{(1)},\ldots,\alpha_k^{(S)}]^T$, $\alpha_k^{(i)} = 0$, $\forall i\ne j$, $\alpha_k^{(j)} = 1$}
		\State $\tilde{\pi}_{k|k-1} \gets \policy{k|k-1}{(j)}$
	\EndFor
	\end{algorithmic}
\end{algorithm}

\begin{Remark}
intuitively, Algorithm \ref{alg:problem_2} implements a {\em context-aware switch}. The switch, at each $k$, selects one contributor  by just picking the smallest element in the vector $\bv{a}_k(\bv{x}_{k-1})$.
\end{Remark}

\section{Numerical Example}\label{sec:example}
We illustrate our results via a simple example {inspired by \cite{7795769}, where a parsing engine for connected vehicles was developed to orchestrate the routes of a network of cars (routes were determined by a shared cloud service from the available information). In this context, we now consider the scenario where an agent (i.e. a connected car) travels in a given geographic area and seeks to crowdsource from other cars in the same area (i.e. the contributors) a route that would minimize a given cost index related to both traffic conditions and passengers' preferences. In the simplified scenario considered here, the agent}  travels along the graph of Fig. \ref{fig:graph_and_reward} {(links might be roads connecting e.g. intersections)}. We implemented Algorithm \ref{alg:problem_2} in Matlab to crowdsource a route enabling the agent to go from node $1$ to $6$.  Specifically, $N=4$,  $x_k\in\mathcal{X}:={1:6}$ is the position of the agent on the graph at time $k$ and $r(\cdot)$ is the reward associated to a given position/node {(e.g. related to the traffic to reach a  node)}. The pmfs $p_{k|k-1}$'s for the target behavior  {(e.g. the {preferred} route of the passengers)} underpin a route that leads to $6$ via $2$, $4$, $5$. In the example, there are $2$ contributors: the routes highlighed in Fig. \ref{fig:graph_and_reward} are a realization drawn, at each $k$, from the pmfs $\pi^{(i)}_{k|k-1}$'s, $i\in\{1, 2\}$\footnote{All pmfs are given at \url{https://github.com/GIOVRUSSO/Control-Group-Code/tree/master/Crowdsourcing}.}. 
\begin{figure}[thbp]
	\centering	
	\includegraphics[width=0.5\linewidth]{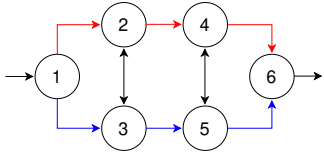}
	\caption{Graph over which the agent is traveling. The route drawn from $\pi^{(1)}_{k|k-1}$'s is  in red, the route from $\pi^{(2)}_{k|k-1}$'s is in blue (colors online). }
	\label{fig:graph_and_reward}
\end{figure}
First, we considered the agent-specific reward in the top-left panel of Fig. \ref{fig:sim_run_1}, which favors node $2$ over node $3$. The algorithm returned the pmfs $\tilde\pi_{k|k-1}$'s and the route of the agent was obtained by sampling from these pmfs at each $k$ (the route is  in the bottom panel of Fig. \ref{fig:sim_run_1} - see the top-right panel of the figure for the corresponding pmfs). We then considered the reward of Fig. \ref{fig:sim_run_2}, which favors node $3$ over node $2$. The algorithm returned a sequence of pmfs underlying the route that this time passes through node $3$ (see  Fig. \ref{fig:sim_run_2}).
\begin{figure}[thbp]
	\centering	
	\psfrag{x1}[c]{{Nodes}}
	\psfrag{y1}[c]{{Reward}}
	\psfrag{x2}[c]{{$k$}}
	\psfrag{y2}[c]{{$\mathcal{X}$}}
	\psfrag{z2}[c]{{$\tilde\pi_{k|k-1}$}}
	\psfrag{x3}[c]{{$k$}}
	\psfrag{y3}[c]{{$x_k$}}	
	\includegraphics[width=0.73\linewidth]{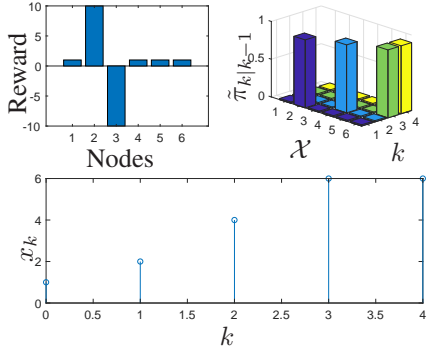}
	\caption{Top-left: agent-specific reward. The agent route obtained by drawing from the pmfs $\tilde\pi_{k|k-1}$'s (top-right) is shown in the bottom panel (the agent moves through node $1$, $2$, $4$ and reaches $6$ at $k =3$).}
	\label{fig:sim_run_1}
\end{figure}

\begin{figure}[thbp]
	\centering	
	\psfrag{x1}[c]{{Nodes}}
	\psfrag{y1}[c]{{Reward}}
	\psfrag{x2}[c]{{$k$}}
	\psfrag{y2}[c]{{$\mathcal{X}$}}
	\psfrag{z2}[c]{{$\tilde\pi_{k|k-1}$}}
	\psfrag{x3}[c]{{$k$}}
	\psfrag{y3}[c]{{$x_k$}}	
	\includegraphics[width=0.73\linewidth]{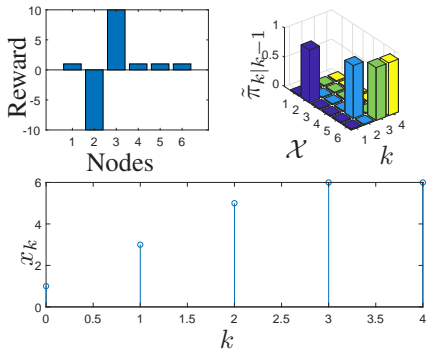}
	\caption{Top-left: agent-specific reward. The agent route obtained by drawing from the pmfs $\tilde\pi_{k|k-1}$'s (top-right) is shown in the bottom panel (now the agent moves through node $1$, $3$, $5$ and  gets to $6$ at $k =3$).}
	\label{fig:sim_run_2}
\end{figure}

\section{Conclusions}
We considered the problem of designing, from data, agents that are able to craft their behavior from a number of contributors in order to fulfill some task. After formalizing the crowdsourcing process as a control problem, we presented a result to compute the agent behavior from the information made available by the contributors. The result was  turned into an algorithmic procedure and our findings were complemented via an example.  {Current research includes: (i) finding special cases for which a solution to Problem \ref{prob:problem_2} can be found without any approximation; (ii) designing a scalable sharing platform to support Algorithm \ref{alg:problem_2}.  Within this platform, the behavior will be computed on the agent side. We are building,} from the example of Section \ref{sec:example}, a hardware-in-the-loop (HiL) demo. {In the demo, Algorithm \ref{alg:problem_2} will be deployed on a real car, which will crowdsource a route from a large number of {\em contributing} cars generated by the HiL infrastructure of \cite{griggs2019vehicle}}. 
\subsubsection*{Acknowledgments}
I would like to thank Dr. Gagliardi and Prof. Naoum-Sawaya at Ivey Business School for reading an earlier version of the results. {I am also grateful to the AE and the anonymous reviewers for their constructive feedback.}

\end{document}